\newcommand{\tensor}[1]{\boldsymbol{\mathcal{#1}}}
\newcommand{\mat}[1]{\mathbf{#1}}
\newcommand{\vect}[1]{\mathbf{#1}}
\theoremstyle{plain}
\newtheorem{theorem}{Theorem}
\title{Learning Efficient Tensor Representations\\
with Ring Structure Networks
}
\author[1]{Qibin Zhao}
\author[1]{Masashi Sugiyama}
\author[2]{Andrzej Cichocki}
\affil[1]{RIKEN AIP, Tokyo, Japan}
\affil[2]{RIKEN BSI, Saitama, Japan}
\date{}
\begin{document}

\maketitle

\begin{abstract}
\emph{Tensor train (TT) decomposition} is a powerful representation for high-order tensors,
which has been successfully applied to various machine learning tasks in recent years. However,
since the tensor product is not commutative,
permutation of data dimensions makes
solutions and TT-ranks of TT decomposition inconsistent.
To alleviate this problem, we propose a permutation symmetric network structure by
employing circular multilinear products over a sequence of low-order core tensors.
This network structure can be graphically interpreted as a cyclic interconnection of tensors, and thus
we call it \emph{tensor ring (TR) representation}. We develop several efficient algorithms
to learn TR representation with adaptive TR-ranks by employing low-rank
approximations. Furthermore, mathematical properties are investigated, which
enables us to perform basic operations in a computationally efficiently way by using TR representations.
Experimental results on synthetic signals and real-world datasets demonstrate that
the proposed TR network is more expressive and consistently informative than existing TT networks.
\end{abstract}

\section{Introduction}
\emph{Tensor decompositions} aim to represent a higher-order (or multi-dimensional) data as a multilinear product of several latent factors, which attracted considerable attentions in machine learning \cite{anandkumar2014tensor,romera2013multilinear,kanagawa2016gaussian} and signal processing~\cite{cong2015tensor,Zhou-PIEEE} in recent years.
For a $d$th-order ``square'' tensor of size $n$ with ``square'' core tensor of size $r$,
standard tensor decompositions are the \emph{canonical polyadic (CP) decomposition} \cite{bro1997parafac,CPD-Comon15,zhao2015bayesian} which represents data as a sum of rank-one tensors by $\mathcal{O}(dnr)$ parameters
and \emph{Tucker decomposition}~\cite{tucker1966some,HOOI:Lathauwer:2000,qiinfinite,wu2014multifactor} which represents data as  a core tensor and several factor matrices by $\mathcal{O}(dnr + r^d)$ parameters.
 In general, CP decomposition provides a compact representation but with difficulties in finding the optimal solution, while Tucker decomposition is stable and flexible but its number of parameters scales exponentially to the tensor order. 

Recently, \emph{tensor networks} have emerged as a powerful tool for analyzing very high-order tensors~\cite{cichocki2016tensor}. A powerful tensor network is \emph{tensor train / matrix product states} (TT/MPS) representation~\cite{oseledets2011tensor}, which requires $\mathcal{O}(dnr^2)$ parameters and avoid the curse of dimensionality through a particular geometry of low-order contracted tensors. TT representation has been applied to model weight parameters in deep neural network and nonlinear kernel learning~\cite{novikov2015tensorizing,NIPS2016_6211}, achieving a significant compression factor and scalability. It also has been successfully used for feature learning and classification~\cite{7207289}. It was shown in~\cite{oseledets2010tt} that TT decomposition with minimal possible compression ranks always exists and can be computed by a sequence of singular value decompositions (SVDs), or by the cross approximation algorithm.  

Although TT decomposition has gained a success in tackling various machine learning tasks, there are some major limitations including that i) the constraint on TT-ranks, i.e., $r_1=r_{d+1}=1$, leads to the limited representation ability and flexibility; ii) TT-ranks are  small in the border cores and large in the middle cores, which might not be optimal for a given data tensor;  iii)  the permutation of data tensor will yield an inconsistent solution, i.e., TT representations and TT-ranks are sensitive to the order of tensor dimensions. Hence, finding the optimal permutation remains a challenging problem.

By taking into account these limitations of TT decomposition, we introduce a new structure of tensor networks, which can be considered as a generalization of TT representations. First of all, we relax the condition over TT-ranks, i.e., $r_1=r_{d+1}=1$, leading to an enhanced representation ability. Secondly, the strict ordering of multilinear products between cores should be alleviated. Third, the cores should be  treated equivalently by making the model symmetric. To this end, we add a new connection between the first  and the last core tensors, yielding a circular tensor products of a set of cores. More specifically, we consider that each tensor element is approximated by performing a trace operation over the sequential multilinear products of cores. Since the trace operation ensures a scalar output, $r_1=r_{d+1}=1$ is not necessary. In addition, the cores can be circularly shifted and treated equivalently due to the properties of the trace operation. By using the graphical illustration (see Fig.~\ref{fig:TRD}), this concept implies that the cores are interconnected circularly, which looks like a ring structure. Hence, we call this model \emph{tensor ring (TR) decomposition} and its cores \emph{tensor ring (TR) representations}.  

\begin{figure}[t]
  \centering
  \includegraphics[width=0.6\columnwidth]{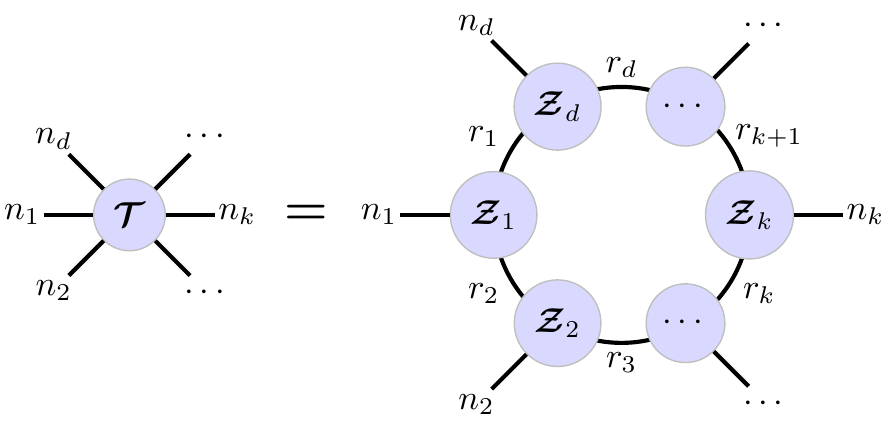}
  \caption{A graphical representation of tensor ring decomposition.}
  \label{fig:TRD}
\end{figure}

To learn TR representations, we firstly develop a non-iterative TR-SVD algorithm that is computationally efficient and scalable. To obtain a low-rank TT representation, we also develop a block-wise alternating least-squares (ALS) algorithm, which updates the tensor products of two adjacent cores first; then a low-rank approximation is employed to separate this term into two cores with the lowest rank.
We experimentally demonstrate the usefulness of the proposed approach on both synthetic and real-world datasets.


\section{Tensor Ring Decomposition}
\label{sec:trm}
The TR decomposition aims to represent a high-order (or multi-dimensional) tensor by a sequence of 3rd-order tensors that are multiplied circularly. Specifically, let $\tensor{T}$ be a $d$th-order  tensor of size $n_1\times n_2\times \cdots\times n_d$, denoted by $\tensor T\in\mathbb{R}^{n_1\times \cdots\times n_d}$, TR representation is to decompose it into a sequence of latent tensors $\tensor Z_k\in\mathbb{R}^{r_k\times n_k\times r_{k+1}}, k=1,2,\ldots, d$, which can be expressed in an element-wise form given by
\begin{equation}
\label{eq:TRD1}
\begin{split}
T(i_1,i_2,\ldots,i_d) =\text{Tr}\left\{\mat Z_1(i_1)\mat Z_2(i_2)\cdots \mat Z_d(i_d)\right\} 
= \text{Tr}\left\{\prod_{k=1}^d \mat Z_k(i_k)\right\}.
\end{split}
\end{equation}
$T(i_1,i_2,\ldots,i_d)$ denotes the $(i_1,i_2,\ldots,i_d)$th element of the tensor. $\mat Z_k(i_k)$ denotes the $i_k$th lateral slice matrix of the latent tensor $\tensor Z_k$, which is of size $r_k\times r_{k+1}$. Note that any two adjacent latent tensors, $\tensor Z_k$ and $\tensor Z_{k+1}$, have a common dimension $r_{k+1}$ on their corresponding modes. The last latent tensor $\tensor Z_d$ is of size $r_d\times n_d\times r_1$, i.e., $r_{d+1}=r_1$, which ensures the product of these matrices is a square matrix. These prerequisites play  key roles in TR decomposition, resulting in some important numerical properties.  For simplicity,  the latent tensor $\tensor Z_k$ can also be called the $k$th-\emph{core} (or \emph{node}). The size of cores, $r_k, k=1,2,\ldots, d$,  collected and denoted by a vector $\vect r = [r_1, r_2,\ldots, r_d]^T$, are called \emph{TR-ranks}. From (\ref{eq:TRD1}), we can observe that $T(i_1,i_2,\ldots,i_d)$ is equivalent to the trace of a sequential product of matrices $\{\mat Z_k(i_k)\}$.  Based on (\ref{eq:TRD1}), we can also express TR decomposition in the tensor form, given by
\begin{equation}
\tensor T = \sum_{\alpha_1,\ldots,\alpha_d=1}^{r_1,\ldots,r_d}\mat z_1(\alpha_1,\alpha_2)\circ \mat z_2(\alpha_2,\alpha_3) \circ \cdots \circ \mat z_d(\alpha_{d},\alpha_1),
\nonumber
\end{equation}
where the symbol `$\circ$' denotes the outer product of vectors and $\mat z_k(\alpha_k,\alpha_{k+1})\in\mathbb{R}^{n_k}$ denotes the ($\alpha_k,\alpha_{k+1}$)th mode-2 fiber of tensor $\tensor Z_k$. The number of parameters in TR representation is $\mathcal{O}(dnr^2)$, which is linear to the tensor order $d$.

The TR representation can also be illustrated graphically by a linear tensor network as shown in Fig.~\ref{fig:TRD}. A node represents a tensor (including a matrix and a vector) whose order is denoted by the number of edges. The number by an edge specifies the size of each mode (or dimension). The connection between two nodes denotes a multilinear product operator between two tensors on a specific mode. This is also called \emph{tensor contraction}, which corresponds to the summation over the indices of that mode. It should be noted that $\tensor Z_d$ is connected to $\tensor Z_1$ by the summation over the index $\alpha_1$, which corresponds to the trace operation.   For simplicity, we denote TR decomposition by $\tensor T = \Re(\tensor Z_1, \tensor Z_2, \ldots, \tensor Z_d)$.

\begin{theorem}[Circular dimensional permutation invariance]
\label{theorem:invariance}
Let  $\tensor T\in\mathbb{R}^{n_1\times n_2\times \ldots\times n_d}$ be a $d$th-order tensor and its TR decomposition is given by  $\tensor T = \Re(\tensor Z_1, \tensor Z_2, \ldots, \tensor Z_d)$. If we define ${\overleftarrow {\tensor {T}}^k}\in \mathbb{R}^{n_{k+1}\times \cdots\times n_d\times n_1\times\cdots\times n_{k}}$ as the circularly shifted version along  the dimensions of $\tensor T$ by k, then we have ${\overleftarrow {\tensor {T}}^k} =\Re(\tensor Z_{k+1}, \ldots, \tensor Z_d, \tensor Z_{1},\ldots \tensor Z_k)$.
\end{theorem}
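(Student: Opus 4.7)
The plan is to verify the identity element-wise and reduce everything to the cyclic invariance of the matrix trace, which is the single fact that makes the TR construction permutation-symmetric in the first place.

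First, I would spell out the action of the circular shift on indices: by definition of $\overleftarrow{\tensor T}^k$, for every admissible index tuple,
\[
\overleftarrow{\tensor T}^k(i_{k+1},\ldots,i_d,i_1,\ldots,i_k) \;=\; \tensor T(i_1,\ldots,i_d).
\]
Then I would substitute the element-wise TR formula (\ref{eq:TRD1}) on the right-hand side to obtain
\[
\overleftarrow{\tensor T}^k(i_{k+1},\ldots,i_d,i_1,\ldots,i_k) \;=\; \text{Tr}\!\left\{\mat Z_1(i_1)\mat Z_2(i_2)\cdots \mat Z_d(i_d)\right\}.
\]

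Next, I would exploit the cyclic property of the trace. Because $\mat Z_j(i_j)\in\mathbb{R}^{r_j\times r_{j+1}}$ with the boundary condition $r_{d+1}=r_1$, the full product $\mat Z_1(i_1)\cdots \mat Z_d(i_d)$ is a square matrix of size $r_1\times r_1$, and so is any cyclic rotation of the factors. Splitting the product as $A \,=\, \mat Z_1(i_1)\cdots \mat Z_k(i_k)$ and $B \,=\, \mat Z_{k+1}(i_{k+1})\cdots \mat Z_d(i_d)$ and applying $\text{Tr}(AB)=\text{Tr}(BA)$ yields
\[
\text{Tr}\!\left\{\mat Z_1(i_1)\cdots \mat Z_d(i_d)\right\} \;=\; \text{Tr}\!\left\{\mat Z_{k+1}(i_{k+1})\cdots \mat Z_d(i_d)\,\mat Z_1(i_1)\cdots \mat Z_k(i_k)\right\}.
\]
The right-hand side is, by the very same formula (\ref{eq:TRD1}), the $(i_{k+1},\ldots,i_d,i_1,\ldots,i_k)$-entry of $\Re(\tensor Z_{k+1},\ldots,\tensor Z_d,\tensor Z_1,\ldots,\tensor Z_k)$. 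Since this equality of entries holds for every index tuple, the two tensors coincide as claimed.

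I do not expect any genuine obstacle: the proof is essentially one application of $\text{Tr}(AB)=\text{Tr}(BA)$. The only thing worth being careful about is that the cyclic rotation of matrix factors preserves dimensional compatibility, and this is exactly where the defining TR condition $r_{d+1}=r_1$ is used; without it, the trace on the shifted product would not even be defined. A minor bookkeeping point is to match the output modes of $\Re(\tensor Z_{k+1},\ldots,\tensor Z_k)$ with the shifted index ordering $(i_{k+1},\ldots,i_d,i_1,\ldots,i_k)$, but this is immediate once the definition of $\overleftarrow{\tensor T}^k$ is written out explicitly.
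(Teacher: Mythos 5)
Your proposal is correct and follows essentially the same route as the paper's own proof: both reduce the claim to the cyclic invariance of the trace, $\text{Tr}(AB)=\text{Tr}(BA)$, applied to the split of the matrix product at position $k$. Your write-up is just a more explicit version of the paper's one-line argument, spelling out the index relabeling under the circular shift and the dimensional compatibility guaranteed by $r_{d+1}=r_1$.
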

A proof of Theorem~\ref{theorem:invariance} is provided in Appendix~\ref{sec:theorem:invariance}.

It should be noted that circular dimensional permutation invariance is an essential feature that distinguishes TR decomposition from TT decomposition. For TT decomposition, the product of matrices must keep a strictly sequential order, yielding that the cores for representing the same tensor with a circular dimension shifting cannot keep invariance. Hence, it is necessary to choose an optimal dimensional permutation when applying the TT decomposition.

\section{Sequential SVDs Algorithm}
\label{sec:TRSVD}

We propose the first algorithm for computing the TR decomposition using $d$ sequential SVDs. This algorithm will be called the \emph{TR-SVD algorithm}.

\begin{theorem}\label{theorem:TR}
Let us assume $\tensor T$ can be represented by a TR decomposition. If  the $k$-unfolding matrix $\mat T_{\langle k\rangle}$ has $Rank(\mat T_{\langle k \rangle})=R_{k+1}$, then there exists a TR decomposition with TR-ranks $\vect r$ which satisfies that $ \exists k, r_1r_{k+1} \leq R_{k+1}$.
\end{theorem}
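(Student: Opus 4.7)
The plan is to prove the theorem by explicit construction. First I would observe that any TR decomposition already forces the reverse inequality $R_{k+1}\leq r_1 r_{k+1}$. Splitting the trace in (\ref{eq:TRD1}) at position $k$ gives
\begin{equation*}
T(i_1,\ldots,i_d)=\sum_{a,b}P_{ab}(i_1,\ldots,i_k)\,Q_{ba}(i_{k+1},\ldots,i_d),
\end{equation*}
where $\mat P=\prod_{j=1}^{k}\mat Z_j(i_j)\in\mathbb R^{r_1\times r_{k+1}}$ and $\mat Q=\prod_{j=k+1}^{d}\mat Z_j(i_j)\in\mathbb R^{r_{k+1}\times r_1}$. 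This exhibits $\mat T_{\langle k\rangle}$ as a product of two matrices sharing a dimension of size $r_1 r_{k+1}$, so any TR decomposition automatically satisfies $r_1 r_{k+1}\geq R_{k+1}$. The theorem is therefore equivalent to producing a TR decomposition that achieves equality $r_1 r_{k+1}=R_{k+1}$ at least one split index $k$.

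For the construction I would fix $k$ and compute a thin rank-$R_{k+1}$ SVD $\mat T_{\langle k\rangle}=\mat U\mat\Sigma\mat V^T$. Choosing any factorization $R_{k+1}=r_1\cdot r_{k+1}$ (the trivial choice $r_1=1$, $r_{k+1}=R_{k+1}$ already suffices for existence), I would split the shared rank index into a pair $(a,b)\in[r_1]\times[r_{k+1}]$ and reshape the two SVD factors into
\begin{equation*}
\tens U\in\mathbb R^{r_1\times n_1\times\cdots\times n_k\times r_{k+1}},\qquad \tens V\in\mathbb R^{r_{k+1}\times n_{k+1}\times\cdots\times n_d\times r_1},
\end{equation*}
with the reshape of $\tens V$ transposing $(a,b)\mapsto(b,a)$ so as to match the role of $Q_{ba}$ in the trace. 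A standard TT-SVD sweep applied to $\tens U$, treating the $r_1$ and $r_{k+1}$ modes as fixed boundary ranks rather than unit ranks, would produce cores $\tensor Z_1,\ldots,\tensor Z_k$ satisfying $\mat Z_1(i_1)\cdots\mat Z_k(i_k)=\tens U(:,i_1,\ldots,i_k,:)$; an analogous sweep on $\tens V$ supplies $\tensor Z_{k+1},\ldots,\tensor Z_d$. Substituting back into (\ref{eq:TRD1}) collapses the trace to the single matrix product $\mat U\mat\Sigma\mat V^T=\mat T_{\langle k\rangle}$, reproducing $\tensor T$ entry-wise, while the resulting border TR-ranks obey $r_1 r_{k+1}=R_{k+1}$ by construction.

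The main obstacle will be the index-bookkeeping that synchronizes the two TT-SVD chains through the trace: the convention used to split the SVD rank dimension into the pair $(a,b)$ on the $\tens U$-side must be compatible with the transposition $(a,b)\mapsto(b,a)$ implicit in $\text{Tr}(\mat P\mat Q)$ on the $\tens V$-side, or else the reconstruction would deliver $\text{Tr}(\mat P\mat Q^T)$ and fail. Once the reshape conventions are fixed consistently and the non-trivial boundary ranks $r_1,r_{k+1}$ are carried honestly through each sequential SVD, the verification $\tensor T=\Re(\tensor Z_1,\ldots,\tensor Z_d)$ reduces to a direct matrix-index calculation, and the theorem follows.
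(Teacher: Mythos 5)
Your proposal is correct and, in substance, more complete than the proof the paper actually gives. The paper's argument consists of exactly your first step: splitting the trace at position $k$ to write $\mat T_{\langle k\rangle} = \mat Z^{\leq k}_{(2)}\bigl(\mat Z^{>k}_{[2]}\bigr)^T$ with inner dimension $r_1 r_{k+1}$, and then asserting ``since the rank of $\mat T_{\langle k\rangle}$ is $R_{k+1}$, we can obtain $r_1 r_{k+1}\leq R_{k+1}$.'' As you correctly observe, that factorization by itself only yields the reverse bound $R_{k+1}=\mathrm{rank}(\mat T_{\langle k\rangle})\leq r_1 r_{k+1}$, so the existence claim in the theorem does not follow from the displayed factorization alone; it needs the constructive half, which the paper supplies only implicitly through the TR-SVD procedure described immediately after the theorem (truncated SVD of $\mat T_{\langle 1\rangle}$, splitting $\mathrm{rank}(\mat T_{\langle 1\rangle})=r_1 r_2$, then sweeping over the remaining modes). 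Your second paragraph makes that construction explicit: a thin SVD of the $k$-unfolding, a choice of factorization $R_{k+1}=r_1\cdot r_{k+1}$ (with $r_1=1$ sufficing, which reduces to a TT decomposition and is a legitimate TR decomposition), and TT-SVD sweeps on the two halves with the boundary ranks carried through. The transposition issue you flag --- matching the $(a,b)$ split on the left factor with the $(b,a)$ roles forced by $\text{Tr}(\mat P\mat Q)$ on the right factor --- is precisely the bookkeeping the paper buries in ``proper reshaping and permutation.'' In short, both arguments rest on the same unfolding-plus-SVD machinery; yours closes the logical gap between ``every TR decomposition satisfies $r_1 r_{k+1}\geq R_{k+1}$'' and ``some TR decomposition satisfies $r_1 r_{k+1}\leq R_{k+1}$,'' which the paper's one-line conclusion elides.
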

\begin{proof}
We can express TR decomposition in the form of $k$-unfolding matrix,
\begin{equation}
\label{eq:TRSVD1}
\begin{split}
 T_{\langle k\rangle}(\overline{i_1\cdots i_k}, \overline{i_{k+1}\cdots i_d})  
 = \text{Tr}\!\left\{\! \prod_{j=1}^k \mat Z_j(i_j)\!\!\!\!  \prod_{j=k+1}^d \mat Z_j(i_j)\! \right\}\!
 = \left\langle\!\!\text{vec}\!\!\left(\prod_{j=1}^k \mat Z_j(i_j)\right)\!\!, \text{vec}\!\!\left(\prod_{j=d}^{k+1} \mat Z^T_j(i_j)\!\! \right)\!\!   \right\rangle.
\end{split}
\end{equation}
It can also be rewritten as
\begin{equation}
\label{eq:TRSVD2}
\begin{split}
 T_{\langle k\rangle}(\overline{i_1\cdots i_k}, \overline{i_{k+1}\cdots i_d}) 
 = \sum_{\alpha_1\alpha_{k+1}} Z^{\leq k}\left(\overline{i_1\cdots i_k}, \overline{\alpha_1\alpha_{k+1}}\right)  Z^{>k}\left(\overline{\alpha_1\alpha_{k+1}},\overline{i_{k+1}\cdots i_d}\right),
\end{split}
\end{equation}
where we defined the subchain by merging multiple linked cores as $\mat Z^{<k}(\overline{i_1 \cdots i_{k-1}}) = \prod_{j=1}^{k-1} \mat Z_j(i_j)$ and $\mat Z^{>k}(\overline{i_{k+1} \cdots i_{d}}) = \prod_{j=k+1}^{d} \mat Z_j(i_j)$. Hence, we can obtain $\mat T_{\langle k\rangle} = \mat Z^{\leq k}_{(2)} (\mat Z^{>k}_{[2]})^T$, where the subchain $\mat Z^{\leq k}_{(2)}$ is of size $\prod_{j=1}^k n_j\times r_1r_{k+1}$, and $\mat Z^{>k}_{[2]}$ is of size $\prod_{j=k+1}^d n_{j}\times r_1r_{k+1}$. Since the rank of $\mat T_{\langle k\rangle}$ is $R_{k+1}$,  we can obtain $r_1r_{k+1} \leq R_{k+1}$.
\end{proof}

According to (\ref{eq:TRSVD1}) and (\ref{eq:TRSVD2}), TR decomposition can be written as
\begin{equation}
T_{\langle 1\rangle}(i_1, \overline{i_2\cdots i_d}) = \sum_{\alpha_1,\alpha_2} Z^{\leq 1}(i_1, \overline{\alpha_1\alpha_2})Z^{>1}(\overline{\alpha_1\alpha_2}, \overline{i_2\cdots i_d}).
\nonumber
\end{equation}
Since the low-rank approximation of $\mat T_{\langle 1\rangle}$ can be obtained by the truncated SVD, which is
\begin{equation}
\mat T_{\langle 1 \rangle} = \mat U \Sigma\mat V^T +\mat E_{1},
\nonumber
\end{equation}
the first core $\tensor Z_1 (i.e., \tensor Z^{\leq 1}) $ of size $r_1\times n_1\times r_2$ can be obtained by the proper reshaping and permutation of $\mat U$ and the subchain $\tensor Z^{>1}$ of size $r_2\times \prod_{j=2}^d n_j\times r_1$ is obtained by the proper reshaping and permutation of $\mat \Sigma \mat V^T$, which corresponds to the remaining $d-1$ dimensions of $\tensor T$.  Subsequently, we can further reshape the subchain $\tensor Z^{>1}$ as a matrix $\mat Z^{>1}\in\mathbb{R}^{r_2n_2\times \prod_{j=3}^d n_j r_1}$ which thus can be written as
\begin{equation}
Z^{>1}(\overline{\alpha_2 i_2}, \overline{i_3\cdots i_d\alpha_1}) = \sum_{\alpha_3}Z_2(\overline{\alpha_2 i_2}, \alpha_3) Z^{>2}(\alpha_3, \overline{i_3\cdots i_d\alpha_1}).
\nonumber
\end{equation}
By applying truncated SVD, i.e., $\mat Z^{>1} = \mat U\Sigma\mat V^T + \mat E_2$, we can obtain the second core $\tensor Z_2$ of size $(r_2\times n_2\times r_3)$ by appropriately reshaping $\mat U$ and the subchain $\tensor Z^{>2}$ by proper reshaping of $\mat \Sigma \mat V^T$. This procedure can be performed sequentially to obtain all $d$ cores $\tensor Z_k, k=1,\ldots, d$.

As proved in \cite{oseledets2011tensor}, the approximation error by using such sequential SVDs is given by
\begin{equation}
\|\tensor{T}-\Re(\tensor Z_1, \tensor Z_2, \ldots, \tensor Z_d)\|_F \leq \sqrt{\sum_{k=1}^{d-1} \|\mat E_k\|_F^2}.
\nonumber
\end{equation}
Hence, given a prescribed relative error $\epsilon_p$, the truncation threshold $\delta$ can be set to $\frac{\epsilon_p}{\sqrt{d-1}}\|\tensor T\|_F$. However, considering that $\|\mat E_1\|_F$ corresponds to two ranks including both $r_1$ and $r_2$, while $\mat \|\mat E_k\|_F, \forall k>1$ correspond to only one rank $r_{k+1}$. Therefore, we modify the truncation threshold as
\begin{equation}
\label{eq:TRSVDthreshold}
\delta_k =\left \{\begin{array}{ll}
             \sqrt{2}\epsilon_p \|\tensor T\|_F / \sqrt{d} & k=1, \\
             \epsilon_p\|\tensor T\|_F / \sqrt{d} & k>1. \\
           \end{array} \right.
\end{equation}
A pseudocode of the TR-SVD algorithm is summarized in Alg.~\ref{alg:TR-SVD}.

\renewcommand{\algorithmicrequire}{\textbf{Input:}}
\renewcommand{\algorithmicensure}{\textbf{Output:}}
\begin{algorithm}[t]
\caption{TR-SVD}
\label{alg:TR-SVD}
\begin{algorithmic}[1]
\Require A $d$th-order tensor $\tensor T$ of size $(n_1\times\cdots\times n_d)$ and the prescribed relative error $\epsilon_p$.
\Ensure  Cores $\tensor Z_k, k=1,\ldots,d$ of TR decomposition and the TR-ranks $\mat r$.
\State Compute truncation threshold $\delta_k$ for $k=1$ and $k>1$.
\State Choose one mode as the start point (e.g., the first mode) and obtain the $1$-unfolding matrix $\mat T_{\langle 1\rangle}$.
\State Low-rank approximation by applying $\delta_1$-truncated SVD: $\mat T_{\langle 1 \rangle} = \mat U\Sigma\mat V^T + \mat E_1$.
\State Split ranks $r_1,r_2$ by
$\min_{r_1,r_2} \quad \|r_1-r_2\|, \quad
 \text{s. t.} \quad r_1r_2 = \text{rank}_{\delta_1}(\mat T_{\langle 1\rangle})$.
\State $\tensor Z_1 \gets \text{permute}(\text{reshape}(\mat U, [ n_1, r_1, r_2]), [2,1,3])$.
\State $\tensor Z^{>1} \gets \text{permute}(\text{reshape}(\mat \Sigma\mat V^T, [r_1,r_2,\prod_{j=2}^d n_j]), [2,3,1])$.
\For{ $k=2$ to $d-1$}
\State $\mat Z^{>k-1} = \text{reshape}(\tensor Z^{>k-1}, [r_k n_k, n_{k+1}\cdots n_d r_1])$.
\State Compute $\delta_k$-truncated SVD:
             $\mat Z^{>k-1} = \mat U\Sigma\mat V^T + \mat E_k$.
\State $r_{k+1} \gets \text{rank}_{\delta_k}(\mat Z^{>k-1})$.
\State $\tensor Z_k \gets \text{reshape}(\mat U, [r_k, n_k, r_{k+1}])$.
\State $\tensor Z^{>k}\gets \text{reshape}(\Sigma\mat V^T, [r_{k+1}, \prod_{j=k+1}^{d} n_j, r_1])$.
\EndFor
\end{algorithmic}
\end{algorithm}

The cores obtained by the TR-SVD algorithm are left-orthogonal, which is $\mat Z^T_{k\langle 2\rangle}\mat Z_{k\langle 2\rangle}=\mat I$ for $k=2,\ldots, d-1$. It should be noted that TR-SVD is a non-recursive algorithm that does not need iterations for convergence. However, it might obtain different representations by choosing a different mode as the start point. This indicates that TR-ranks $\mat r$ is not necessary to be the global optimum in TR-SVD. 

\section{Block-Wise Alternating Least-Squares (ALS) Algorithm}
\label{sec:BALS}

The ALS algorithm has been widely applied to various tensor decomposition models such as CP and Tucker decompositions~\cite{Kolda09,Holtz-TT-2012}. The main concept of ALS is optimizing one core while the other cores are fixed, and this procedure will be repeated until some convergence criterion is satisfied. Given a $d$th-order tensor $\tensor T$, our goal is optimize the error function as
\begin{equation}
\label{eq:alsobj}
\min_{\tensor Z_1,\ldots, \tensor Z_d }\|\tensor T - \Re(\tensor Z_1, \ldots, \tensor Z_d)\|_F.
\end{equation}

According to the TR definition in (\ref{eq:TRD1}), we have
\begin{equation}
\begin{split}
&
T(i_1,i_2,\ldots,i_d)  = \sum_{\alpha_1,\ldots,\alpha_d}Z_1(\alpha_1,i_1,\alpha_2)Z_2(\alpha_2,i_2,\alpha_3) \cdots Z_d(\alpha_{d},i_d,\alpha_1)\\
&
 =\sum_{\alpha_{k},\alpha_{k+1}} \Big\{Z_k(\alpha_k,i_k, \alpha_{k+1}) Z^{\neq k} (\alpha_{k+1}, \overline{i_{k+1}\cdots i_d i_1 \cdots i_{k-1}} ,\alpha_{k})\Big\},
\end{split}
\nonumber
\end{equation}
where $\mat Z^{\neq k}(\overline{i_{k+1} \cdots i_{d}i_1\ldots i_{k-1} }) = \prod_{j=k+1}^{d} \mat Z_j(i_j)\prod_{j=1}^{k-1} \mat Z_j(i_j)$ denotes a slice matrix of subchain tensor by  merging all cores except $k$th core $\tensor Z_k$. Hence, the mode-$k$ unfolding matrix of $\tensor T$ can be expressed by
\begin{equation}
\begin{split}
 T_{[k]}(i_k, \overline{i_{k+1} \cdots i_d i_1 \cdots i_{k-1}}) = \sum_{\alpha_k\alpha_{k+1}}\Big\{ Z_k(i_k, \overline{\alpha_k \alpha_{k+1}}) 
 Z^{\neq k}(\overline{\alpha_k\alpha_{k+1}},\overline{i_{k+1}\cdots i_d i_1\cdots i_{k-1}})\Big\}.
\end{split}
\nonumber
\end{equation}
By applying different mode-$k$ unfolding operations, we can obtain that
$\mat T_{[k]} = {\mat Z_k}_{(2)} \left(\mat Z^{\neq k}_{[2]}\right)^T $, where $\tensor Z^{\neq k}$  is a subchain obtained by merging $d-1$ cores.

The objective function in (\ref{eq:alsobj}) can be optimized by solving $d$ subproblems alternatively. More specifically,  having fixed all but one core, the problem reduces to a linear least squares problem, which is
\begin{equation}
\min_{\mat Z_{k(2)}} \Big\| \mat T_{[k]} - \mat Z_{k(2)} \left(\mat Z^{\neq k}_{[2]}\right)^T \big\|_F, \quad k=1,\ldots,d.
\nonumber
\end{equation}

Here, we propose a computationally efficient block-wise ALS (BALS) algorithm by utilizing truncated SVD, which facilitates the self-adaptation of ranks. The main idea is to perform the blockwise optimization followed by the separation of a block into individual cores. To achieve this, we consider merging two linked  cores, e.g., $\tensor Z_k, \tensor Z_{k+1}$, into a block (or subchain) $\tensor Z^{(k,k+1)}\in \mathbb{R}^{r_k\times n_k n_{k+1}\times r_{k+2}}$.  Thus, the subchain $\tensor Z^{(k,k+1)}$ can be optimized while leaving all cores except $\tensor Z_k, \tensor Z_{k+1}$ fixed. Subsequently, the subchain $\tensor Z^{(k,k+1)}$ can be reshaped into $\tilde{\mat Z}^{(k,k+1)}\in\mathbb{R}^{r_kn_k\times n_{k+1}r_{k+2}}$ and separated into a left-orthonormal core $\tensor Z_k$ and $\tensor Z_{k+1}$ by a truncated SVD:
\begin{equation}
\label{eq:BALS1}
\tilde{\mat Z}^{(k,k+1)} = \mat U \mat \Sigma \mat V^T = \mat Z_{k\langle 2\rangle }\mat Z_{k+1\langle 1\rangle},
\end{equation}
where $\mat Z_{k\langle 2\rangle }\in\mathbb{R}^{r_kn_k\times r_{k+1}} $ is the $2$-unfolding matrix of core $\tensor Z_k$, which can be set to $\mat U$, while $\mat Z_{k+1\langle 1\rangle}\in \mathbb{R}^{r_{k+1}\times n_{k+1}r_{k+2}}$ is the $1$-unfolding matrix of core $\tensor Z_{k+1}$, which can be set to $\mat \Sigma\mat V^T $. This procedure thus moves on to optimize the next block cores $\tensor Z^{(k+1,k+2)}, \ldots, \tensor Z^{(d-1,d)},\tensor Z^{(d,1)}$ successively in the similar way. Note that since the TR model is circular, the $d$th core  can also be merged with the first core yielding the block core $\tensor Z^{(d,1)}$.

The key advantage of our BALS algorithm is the rank adaptation ability which can be achieved simply by separating the block core into two cores via truncated SVD, as shown in (\ref{eq:BALS1}). The truncated rank $r_{k+1}$ can be chosen such that the approximation error is below a certain threshold. One possible choice is to use the same threshold as in the TR-SVD algorithm, i.e., $\delta_k$ described in (\ref{eq:TRSVDthreshold}). However, the empirical experience shows that this threshold  often leads to overfitting and the truncated rank is higher than the optimal rank. This is because the updated block $\tensor Z^{(k,k+1)}$ during ALS iterations is not a closed form solution and many iterations are necessary for convergence. To relieve this problem, we choose the truncation threshold based on both the current and the desired approximation errors, which is
\begin{equation}
\nonumber
\delta = \max \left\{\epsilon\|\tensor{T}\|_F/\sqrt{d},\, \epsilon_{p}\|\tensor{T}\|_F/ \sqrt{d}\right\}.
\end{equation}
A pseudo code of the BALS algorithm is described in Alg.~\ref{alg:TR-BALS}.

\begin{algorithm}[t]
\caption{TR-BALS}
\label{alg:TR-BALS}
\begin{algorithmic}[1]
\Require A $d$-dimensional tensor $\tensor T$ of size $(n_1\times\cdots\times n_d)$ and the prescribed relative error $\epsilon_{p}$.
\Ensure  Cores $\tensor Z_k$ and TR-ranks $r_k$, $k=1,\ldots,d$.
\State Initialize $r_k =1$ for $k=1,\ldots, d$.
\State Initialize $\tensor Z_k\in\mathbb{R}^{r_{k}\times n_k\times r_{k+1}}$ for $k=1,\ldots, d$.
\Repeat $\quad k\in\text{circular}\{1,2,\ldots, d\}$;
\State Compute the subchain $\tensor Z^{\neq (k,k+1)}$. 
\State \parbox[t]{\dimexpr\linewidth-\algorithmicindent-0.1in} {Obtain the mode-2 unfolding matrix $\mat Z^{\neq (k,k+1)}_{[2]}$ of size $\prod_{j=1}^d n_j/(n_k n_{k+1})\times r_k r_{k+2}$. \strut}
\State $\mat Z^{(k,k+1)}_{(2)} \gets \arg\min \left\|{\mat T_{[k]} - \mat Z^{(k,k+1)}_{(2)} \left(\mat Z_{[2]}^{\neq (k, k+1)}\right)^T}\right\|_F$.
\State Tensorization of mode-2 unfolding matrix
\begin{equation}\nonumber \tensor Z^{(k,k+1)} \gets \text{folding}(\mat Z^{(k,k+1)}_{(2)}).\end{equation}
\State Reshape the block core by
\begin{equation}\nonumber \tilde{\mat Z}^{(k,k+1)} \gets \text{reshape}(\tensor Z^{(k,k+1)}, [r_k n_k\times n_{k+1}r_{k+2} ]).   \end{equation}
\State \parbox[t]{\dimexpr\linewidth-\algorithmicindent-0.1in} {Low-rank approximation by $\delta$-truncated SVD
         $\tilde{\mat Z}^{(k,k+1)} = \mat U \mat \Sigma \mat V^T$. \strut}
\State $\tensor Z_k \gets \text{reshape}(\mat U, [r_k,n_k,r_{k+1}])$.
\State $\tensor Z_{k+1} \gets \text{reshape}(\Sigma \mat V^T, [r_{k+1},n_{k+1},r_{k+2}])$.
\State $r_{k+1} \gets \text{rank}_{\delta} (\tilde{\mat Z}^{(k,k+1)})$.
\State $k\gets k+1$.
\Until{The desired approximation accuracy is achieved, i.e., $\epsilon \leq \epsilon_p$. }
\end{algorithmic}
\end{algorithm}

\section{Properties of TR Representation}
\label{sec:property}
 By assuming that tensor data have been already represented as TR decompositions, i.e., a sequence of third-order cores, we justify and demonstrate that the basic operations on tensors, such as the \emph{addition}, \emph{multilinear product}, \emph{Hadamard product}, \emph{inner product} and \emph{Frobenius norm}, can be performed efficiently by the appropriate operations on each individual cores.
We have the following theorems:

\begin{theorem}\label{theorem:T1+T2}
Let $\tensor T_1$ and $\tensor T_2$ be $d$th-order tensors of size $n_1\times \cdots\times n_d$. If TR decompositions of these two tensors are $\tensor T_1 = \Re(\tensor Z_1,\ldots,\tensor Z_d)$ where $\tensor Z_k\in\mathbb{R}^{r_k\times n_k\times r_{k+1}}$ and $\tensor T_2 = \Re(\tensor Y_1,\ldots,\tensor Y_d)$ where $\tensor Y_k\in\mathbb{R}^{s_k\times n_k\times s_{k+1}}$, then the addition of these two tensors, $\tensor T_3= \tensor T_1 + \tensor T_2$, can also be represented in the TR format given by $\tensor T_3 = \Re(\tensor X_1, \ldots, \tensor X_d)$, where $\tensor X_k\in\mathbb{R}^{q_k\times n_k\times q_{k+1}}$ and $q_k = r_k+s_k$. Each core $\tensor X_k$ can be computed by
\begin{equation}
\label{eq:TRsum1}
\mat X_k(i_k) = \left(
                  \begin{array}{cc}
                    \mat Z_k(i_k) & 0 \\
                    0 & \mat Y_k(i_k) \\
                  \end{array}
                \right),
                  \begin{array}{c}
                     i_k=1,\ldots,n_k,\\
                    k=1,\ldots, d.\\
                  \end{array}
\end{equation}
\end{theorem}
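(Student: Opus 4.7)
The plan is to verify the element-wise formula in (\ref{eq:TRD1}) directly: evaluate $\mathrm{Tr}\{\prod_{k=1}^d \mat X_k(i_k)\}$ for the block-diagonal slices defined in (\ref{eq:TRsum1}) and show that it equals $T_1(i_1,\ldots,i_d)+T_2(i_1,\ldots,i_d)$. The key structural fact I will rely on is that the class of block-diagonal matrices (with a fixed block partition) is closed under multiplication, together with the additive behaviour of the trace with respect to a block-diagonal decomposition.

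First I would check the sizes: by construction $\mat X_k(i_k)$ is $(r_k+s_k)\times(r_{k+1}+s_{k+1}) = q_k\times q_{k+1}$, and the circular boundary condition $q_{d+1}=r_{d+1}+s_{d+1}=r_1+s_1=q_1$ holds, so the sequence $\mat X_1(i_1),\ldots,\mat X_d(i_d)$ is legitimately a list of slice matrices of a TR representation with ranks $q_k$. Next, I would prove by induction on $k$ that
\begin{equation}
\prod_{j=1}^k \mat X_j(i_j) = \begin{pmatrix} \prod_{j=1}^k \mat Z_j(i_j) & 0 \\ 0 & \prod_{j=1}^k \mat Y_j(i_j) \end{pmatrix},
\nonumber
\end{equation}
where the off-diagonal blocks vanish because in each step the block-diagonal structure of the current partial product and of $\mat X_{k+1}(i_{k+1})$ force any cross term to involve a factor of $0$. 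The consistency of the block partition at each step is exactly what the choice $q_k=r_k+s_k$ guarantees, which is the only place this choice of ranks is really used.

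Taking $k=d$ gives a block-diagonal matrix in $\mathbb{R}^{q_1\times q_1}$ whose diagonal blocks are $\prod_{j=1}^d \mat Z_j(i_j)\in\mathbb{R}^{r_1\times r_1}$ and $\prod_{j=1}^d \mat Y_j(i_j)\in\mathbb{R}^{s_1\times s_1}$. Applying the trace and using $\mathrm{Tr}(\mathrm{blkdiag}(\mat A,\mat B))=\mathrm{Tr}(\mat A)+\mathrm{Tr}(\mat B)$ yields
\begin{equation}
\mathrm{Tr}\!\left\{\prod_{k=1}^d \mat X_k(i_k)\right\} = \mathrm{Tr}\!\left\{\prod_{k=1}^d \mat Z_k(i_k)\right\} + \mathrm{Tr}\!\left\{\prod_{k=1}^d \mat Y_k(i_k)\right\} = T_1(i_1,\ldots,i_d)+T_2(i_1,\ldots,i_d),
\nonumber
\end{equation}
which by (\ref{eq:TRD1}) is precisely $T_3(i_1,\ldots,i_d)$. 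Since this holds for every multi-index, $\tensor T_3=\Re(\tensor X_1,\ldots,\tensor X_d)$ as claimed.

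There is no real obstacle here; the only point that needs attention is bookkeeping the block partitions across the whole sequence, in particular verifying the circular boundary matching $q_{d+1}=q_1$ and keeping the inductive step clean. Everything else reduces to the block-matrix multiplication identity and linearity of the trace.
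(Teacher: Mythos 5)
Your proposal is correct and follows essentially the same route as the paper's proof: both arguments reduce to the observation that the product of the block-diagonal slices $\mat X_k(i_k)$ is block-diagonal with blocks $\prod_k \mat Z_k(i_k)$ and $\prod_k \mat Y_k(i_k)$, and then apply additivity of the trace over the diagonal blocks. Your version merely spells out the induction and the rank bookkeeping ($q_{d+1}=q_1$) that the paper leaves implicit.
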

A proof of Theorem~\ref{theorem:T1+T2} is provided in Appendix~\ref{sec:theorem:T1+T2}.
Note that the sizes of new cores are increased and not optimal in general. This problem can be solved by the rounding procedure \cite{oseledets2011tensor}.

\begin{theorem}
\label{theorem:TRtimesvectors}
Let $\tensor T\in\mathbb{R}^{n_1\times\cdots\times n_d}$ be a $d$th-order tensor whose TR representation is $\tensor T = \Re(\tensor Z_1,\ldots,\tensor Z_d)$ and $\mat u_k\in\mathbb{R}^{n_k}, k=1,\ldots,d$ be a set of vectors, then the multilinear products, denoted by $c=\tensor T \times_1 \mat u_1^T\times_2\cdots \times_d \mat u_d^T$, can be computed by the multilinear product on each cores, which is
\begin{equation}
\label{eq:TRmultiproduct}
\begin{split}
c=\Re(\mat X_1, \ldots, \mat X_d) \; \text{where} \; \mat X_k = \sum_{i_k=1}^{n_k}\mat Z_k(i_k) u_k(i_k).
\end{split}
\end{equation}
\end{theorem}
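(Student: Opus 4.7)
My plan is to reduce the computation of $c$ to a straightforward manipulation of the element-wise TR formula by exploiting linearity of trace and the fact that each core matrix $\mat Z_k(i_k)$ depends on only its own mode index.

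First, I would expand $c$ using the definition of multilinear product,
\begin{equation*}
c = \sum_{i_1,\ldots,i_d} T(i_1,\ldots,i_d)\, u_1(i_1)\cdots u_d(i_d),
\end{equation*}
and then substitute the element-wise TR expression (\ref{eq:TRD1}) to obtain
\begin{equation*}
c = \sum_{i_1,\ldots,i_d} \text{Tr}\!\left\{ \mat Z_1(i_1)\cdots \mat Z_d(i_d) \right\} u_1(i_1)\cdots u_d(i_d).
\end{equation*}

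Next, since each $u_k(i_k)$ is a scalar, I would absorb it into the corresponding matrix factor and use linearity of the trace to exchange the summation and trace, yielding
\begin{equation*}
c = \text{Tr}\!\left\{ \sum_{i_1,\ldots,i_d} \bigl(u_1(i_1)\mat Z_1(i_1)\bigr)\cdots \bigl(u_d(i_d)\mat Z_d(i_d)\bigr) \right\}.
\end{equation*}
Because the $k$th factor depends only on $i_k$, the $d$-fold sum of the product factors as a product of single sums, so the bracketed expression equals $\prod_{k=1}^d \sum_{i_k} u_k(i_k)\mat Z_k(i_k) = \prod_{k=1}^d \mat X_k$. This gives $c = \text{Tr}\{\prod_k \mat X_k\}$, matching the claimed formula $\Re(\mat X_1,\ldots,\mat X_d)$ once one observes that contracting all $n_k$ modes has left each ``core'' $\mat X_k\in\mathbb{R}^{r_k\times r_{k+1}}$ a matrix, and the $\Re(\cdot)$ operator on matrices with only the ring-connecting indices $\alpha_k$ alive reduces to precisely the trace of their product.

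I do not anticipate a serious obstacle: the argument is essentially linearity of trace plus the separability of the product of sums. The only point that requires care is the notational compatibility in the final step, i.e., interpreting $\Re(\mat X_1,\ldots,\mat X_d)$ consistently with Section~\ref{sec:trm} when the $n_k$ modes have been consumed; this amounts to checking that the trace-of-product definition of the TR operator specializes correctly to this degenerate case, which is immediate from (\ref{eq:TRD1}). The computational benefit is also worth noting in passing: the right-hand side of (\ref{eq:TRmultiproduct}) requires only $\mathcal{O}(dnr^2)$ operations to form the $\mat X_k$ plus $\mathcal{O}(dr^3)$ to evaluate the trace, avoiding the exponential cost of forming $\tensor T$ explicitly.
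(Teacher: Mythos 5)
Your proposal is correct and follows essentially the same route as the paper's proof: expand $c$ as a weighted sum over all indices, substitute the element-wise TR formula, use linearity of the trace, and factor the $d$-fold sum into a product of single-mode sums to obtain $c=\text{Tr}\{\prod_k \mat X_k\}$. Your added remarks on the degenerate interpretation of $\Re(\cdot)$ for matrices and the $\mathcal{O}(dnr^2+dr^3)$ cost are consistent with the paper's statement and proof.
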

A proof of Theorem~\ref{theorem:TRtimesvectors} is provided in Appendix~\ref{sec:theorem:TRtimesvectors}.
It should be noted that the computational complexity in the original tensor form is $\mathcal{O}(dn^d)$, while it reduces to $\mathcal{O}(dnr^2 + dr^3)$ that is linear to tensor order $d$ by using TR representation.

\begin{theorem}\label{theorem:T1HadamardT2}
Let $\tensor T_1$ and $\tensor T_2$ be $d$th-order tensors of size $n_1\times \cdots\times n_d$. If the TR decompositions of these two tensors are $\tensor T_1 = \Re(\tensor Z_1,\ldots,\tensor Z_d)$ where $\tensor Z_k\in\mathbb{R}^{r_k\times n_k\times r_{k+1}}$ and $\tensor T_2 = \Re(\tensor Y_1,\ldots,\tensor Y_d)$ where $\tensor Y_k\in\mathbb{R}^{s_k\times n_k\times s_{k+1}}$, then the Hadamard product of these two tensors, $\tensor T_3= \tensor T_1 \circledast \tensor T_2$, can also be represented in the TR format given by $\tensor T_3 = \Re(\tensor X_1, \ldots, \tensor X_d)$, where $\tensor X_k\in\mathbb{R}^{q_k\times n_k\times q_{k+1}}$ and $q_k = r_ks_k$. Each core $\tensor X_k$ can be computed by
\begin{equation}
\label{eq:TRhadama}
\mat X_k(i_k) = \mat Z_k(i_k) \otimes \mat Y_k(i_k), \quad k=1,\ldots,d.
\end{equation}
\end{theorem}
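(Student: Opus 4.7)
The plan is to work at the level of individual entries and reduce the identity to two well-known facts about Kronecker products: the mixed product property and the multiplicativity of the trace on tensor products. Starting from the element-wise definition of the Hadamard product, I would write
\begin{equation*}
T_3(i_1,\ldots,i_d) = T_1(i_1,\ldots,i_d)\,T_2(i_1,\ldots,i_d) = \mathrm{Tr}\!\left(\prod_{k=1}^d \mat Z_k(i_k)\right)\mathrm{Tr}\!\left(\prod_{k=1}^d \mat Y_k(i_k)\right),
\end{equation*}
and then try to fold the product of traces into a single trace over the proposed cores $\mat X_k(i_k) = \mat Z_k(i_k)\otimes \mat Y_k(i_k)$.

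The first key step is to apply the mixed product property $(\mat A\otimes \mat B)(\mat C\otimes \mat D) = (\mat A\mat C)\otimes(\mat B\mat D)$ inductively along the chain. This gives
\begin{equation*}
\prod_{k=1}^d \mat X_k(i_k) = \prod_{k=1}^d \bigl(\mat Z_k(i_k)\otimes \mat Y_k(i_k)\bigr) = \left(\prod_{k=1}^d \mat Z_k(i_k)\right) \otimes \left(\prod_{k=1}^d \mat Y_k(i_k)\right).
\end{equation*}
The dimensional compatibility needed for these Kronecker products to telescope is exactly guaranteed by the TR setup: $\mat Z_k(i_k)$ is $r_k\times r_{k+1}$ and $\mat Y_k(i_k)$ is $s_k\times s_{k+1}$, so $\mat X_k(i_k)$ is $r_ks_k\times r_{k+1}s_{k+1}=q_k\times q_{k+1}$, and the boundary condition $q_{d+1}=r_{d+1}s_{d+1}=r_1 s_1 = q_1$ is inherited from the two individual TR decompositions.

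The second key step is to apply $\mathrm{Tr}(\mat A\otimes \mat B) = \mathrm{Tr}(\mat A)\,\mathrm{Tr}(\mat B)$ to the outer Kronecker product above, which immediately yields
\begin{equation*}
\mathrm{Tr}\!\left(\prod_{k=1}^d \mat X_k(i_k)\right) = \mathrm{Tr}\!\left(\prod_{k=1}^d \mat Z_k(i_k)\right)\mathrm{Tr}\!\left(\prod_{k=1}^d \mat Y_k(i_k)\right) = T_3(i_1,\ldots,i_d),
\end{equation*}
so $\tensor T_3 = \Re(\tensor X_1,\ldots,\tensor X_d)$ by the element-wise definition in (\ref{eq:TRD1}).

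There is no real obstacle here beyond bookkeeping: the whole argument is driven by two algebraic identities, and the only thing one must be careful about is verifying that the Kronecker factorization lines up index-by-index along the ring, including closure at the $d\to 1$ wraparound. I would also briefly note, as in Theorem~\ref{theorem:T1+T2}, that the resulting TR-ranks $q_k=r_k s_k$ are in general not optimal and can be reduced by a subsequent rounding step.
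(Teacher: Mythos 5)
Your proposal is correct and follows essentially the same route as the paper's proof: both arguments rest on the two identities $\mathrm{Tr}(\mat A\otimes\mat B)=\mathrm{Tr}(\mat A)\,\mathrm{Tr}(\mat B)$ and the mixed product property $\prod_k(\mat Z_k\otimes\mat Y_k)=\bigl(\prod_k\mat Z_k\bigr)\otimes\bigl(\prod_k\mat Y_k\bigr)$, with the paper merely writing the chain of equalities in the reverse direction. Your additional checks on dimensional compatibility and the wraparound condition $q_{d+1}=q_1$ are worthwhile bookkeeping that the paper leaves implicit.
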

A proof of Theorem~\ref{theorem:T1HadamardT2} is provided in Appendix~\ref{sec:theorem:T1HadamardT2}.
Furthermore, one can compute the \emph{inner product} of two tensors in TR representations. For two tensors $\tensor T_1$ and $\tensor T_2$, it is defined as
$
\langle \tensor T_1, \tensor T_2\rangle  =  \sum_{i_1,\ldots,i_d} T_3(i_1,\ldots,i_d),
$
where $\tensor T_3 =  \tensor T_1 \circledast \tensor T_2$. Thus, the inner product can be computed by applying the Hadamard product and then computing the multilinear product between $\tensor T_3$ and vectors of all ones, i.e., $\mat u_k=\mat 1, k=1,\ldots,d$.  In contrast to $\mathcal{O}(n^d)$ in the original tensor form, the computational complexity is equal to $\mathcal{O}(dnq^2+ dq^3)$ that is linear to $d$ by using TR representation. Similarly, we can also compute the \emph{Frobenius norm} $\|\tensor T \|_F = \sqrt{\langle\tensor T, \tensor T \rangle}$ in the TR representation.

\section{Experimental Results}
\label{sec:experiment}
In this section, we experimentally demonstrate the usefulness of the proposed approach.

\subsection{Numerical Illustration}
\begin{figure}
  \centering
  \includegraphics[width=0.9\columnwidth]{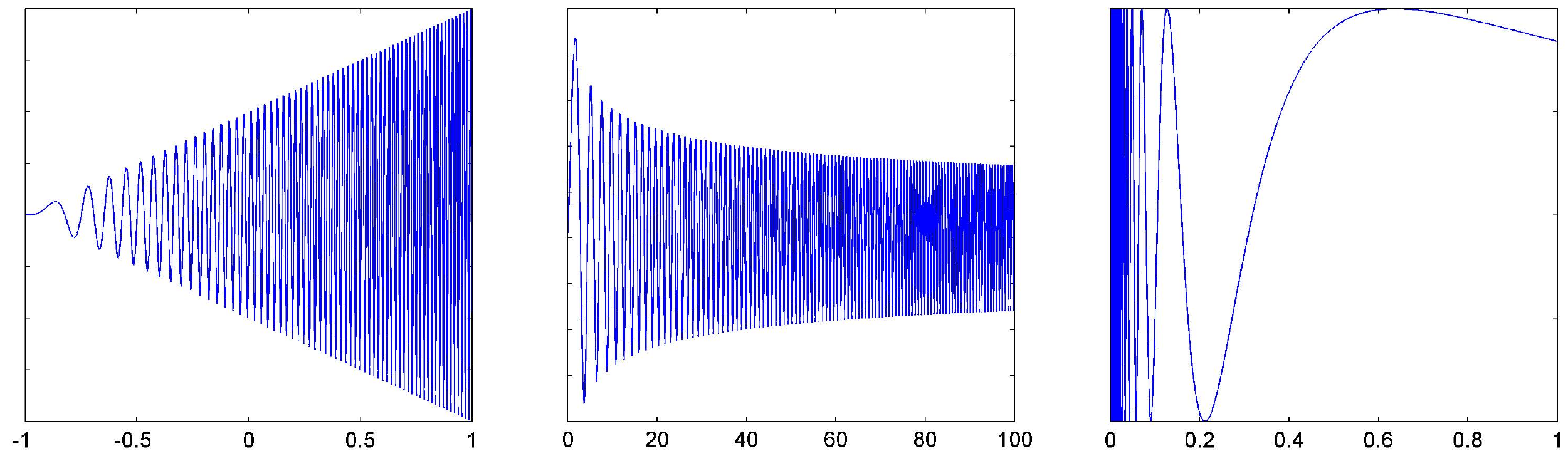}\\
  \caption{Highly oscillated functions. The left panel is $f_1(x)=(x+1)\sin(100(x+1)^2)$. The middle panel is Airy function: $f_2(x)=x^{-\frac{1}{4}}\sin(\frac{2}{3}x^{\frac{3}{2}})$. The right panel is Chirp function $f_3(x)=\sin\frac{x}{4}\cos(x^2)$.  }
  \label{fig:functions}
\end{figure}

We consider highly oscillating functions that can be approximated well by a low-rank TT format~\cite{khoromskij2015tensor}, as shown in Fig.~\ref{fig:functions}. We firstly tensorize the functional vector resulting in a $d$th-order tensor of size $n_1\times n_2\times\cdots\times n_d $, where isometric size is usually preferred, i.e., $n_1=n_2=\cdots=n_d = n$, with the total number of elements denoted by $N=n^d$.   The error bound (tolerance), denoted by $\epsilon_p=10^{-3}$, is given as the stopping criterion for all compared algorithms.   As shown in Table~\ref{Tab:Simulation1}, TR-SVD and TR-BALS  can obtain comparable results with TT-SVD  while outperform TT-SVD when noise is involved. These results indicate that TR representation is more robust to  noise than TT representation. 

\begin{table*}[t]
\centering
\caption{The functional data $f_1(x),f_2(x),f_3(x)$ is tensorized to 10th-order tensor ($4\times 4\times\ldots\times 4$). In the table, $\epsilon$, $\bar{r}$, $N_p$ denote relative error, average rank, and the total number of parameters, respectively.  }
\label{Tab:Simulation1}
\vspace*{2mm}
\begin{tabular}{cccccc c cccc}
\hline
 \multirow{2}{*}{}  &  \multicolumn{4}{c}{$f_1(x)$} &&  \multicolumn{4}{c}{$f_2(x)$ }  \\
\cline{2-5} \cline{7-10}
& $\epsilon$ & $\bar{r}$ & $N_p$ &  Time (s)  & & $\epsilon$ & $\bar{r}$ & $N_p$ &  Time (s) \\
\hline
 TT-SVD & 3e-4 & 4.4 & 1032 & 0.17 &&  3e-4 & 5 & 1360 & 0.16 \\
 TR-SVD &  3e-4 & 4.4 & 1032 & 0.17  &&  3e-4 & 5 & 1360 & 0.28\\
   TR-BALS & 9e-4 & 4.3 & 1052 & 4.6 &&  8e-4 & 4.9 & 1324 & 5.7\\
\hline
\hline
 \multirow{2}{*}{}  &  \multicolumn{4}{c}{$f_3(x)$ }  &&  \multicolumn{4}{c}{$f_1(x)+\mathcal{N}(0,\sigma), SNR=60dB$ }\\
\cline{2-5} \cline{7-10}
& $\epsilon$ & $\bar{r}$ & $N_p$ &  Time (s)  & & $\epsilon$ & $\bar{r}$ & $N_p$ &  Time (s)\\
\hline
 TT-SVD &  3e-4 & 3.7 & 680 & 0.16  &&  1e-3 & 16.6 & 13064 & 0.5\\
 TR-SVD &  5e-4 & 3.6 & 668 & 0.15  &&  1e-3 & 9.7 & 4644 & 0.4\\
 TR-BALS &  5e-4 & 3.7 & 728 & 3.4  &&  1e-3 & 4.2 & 1000 & 6.1\\
\hline
\end{tabular}
\end{table*}

\begin{table*}[t]
\caption{The results under different shifts of dimensions on functional data $f_2(x)$ with error bound set at $10^{-3}$. For the 10th-order tensor, all 9 dimension shifts were considered to compare the average rank $\bar{r}$. }
\label{tab:Simulation2}
\centering
\vspace*{2mm}
\begin{tabular}{c ccccccccc }
\hline
\multirow{2}{*}{} & \multicolumn{9}{c}{$\bar{r}$}  \\
 \cline{2-10}
 & 1 & 2 & 3 &  4 & 5 & 6 & 7 & 8 & 9 \\
 \hline
 TT-SVD & 5.2 & 5.8 & 6 &  6.2 & 7 & 7 & 8.5 & 14.6 & 8.4 \\
  TR-SVD & 5.2 & 5.8 & 5.9 &  6.2 & 9.6 & 10 & 14 & 12.7 & 6.5 \\
     TR-BALS & 5 & 4.9 & 5 &  4.9 & 4.9 & 5 & 5 & 4.8 & 4.9 \\
\hline
\end{tabular}
\end{table*}

It should be noted that TT representation has the property that $r_1=r_{d+1}=1$ and $r_k, k=2,\ldots,d-1$ are bounded by the rank of $k$-unfolding matrix of $\mat T_{\langle k\rangle}$, which limits its generalization ability and consistency when the tensor modes have been shifted or permuted.  To demonstrate this, we consider shifting the dimensions of $\tensor T$ of size ${n_1\times \cdots\times n_d}$ by $k$ times leading to $\overleftarrow{\tensor{T}}^k$ of size $n_{k+1}\times \cdots\times n_d\times n_1\times\cdots\times n_{k}$. As shown in Table~\ref{tab:Simulation2}, the average TT-ranks are varied dramatically along with the different shifts. In particular, when $k=8$, $\bar{r}_{tt}$ becomes 14.6, resulting in a large number of parameters $N_p=10376$.  In contrast to TT, TR-BALS can obtain consistent and compact representation.

\subsection{COIL-100 dataset}

\begin{table}[t]
\caption{The comparisons of different algorithms on Coil-100 dataset. $\epsilon$, $r_{max}$,$\bar{r}$ denote relative error, the maximum rank and the average rank, respectively.  }
\label{tab:coil100}
\centering
\begin{tabular}{c c c c c c c}
\hline
  & $\epsilon$ & $r_{max}$ & $\bar{r}$ & \pbox{1in} {Acc. (\%) \\($\rho=50 \%$)} & \pbox{1in} {Acc. (\%)\\ ($\rho=10 \%$)} \\
\hline
\multirow{5}{*} {TT-SVD}
 & 0.19 & 67  & 47.3 & 99.05  & 89.11  \\
 & 0.28 & 23  & 16.3 & 98.99  & 88.45  \\
 & 0.37 & 8   & 6.3  & 96.29  & 86.02  \\
 & 0.46 & 3   & 2.7  & 47.78  & 44.00  \\
 \hline
\multirow{5}{*} {TR-SVD}
 & 0.19 & 23  & 12.0 & 99.14  & 89.29  \\
 & 0.28 & 10  & 6.0  & 99.19 & 89.89  \\
 & 0.36 & 5   & 3.5  & 98.51  & 88.10  \\
 & 0.43 & 3   & 2.3  & 83.43  & 73.20 \\
 \hline
\end{tabular}
\end{table}

The Columbia Object Image Libraries (COIL)-100 dataset~\cite{nayar1996columbia}  contains 7200 color images of 100 objects (72 images per object) with different reflectance and complex geometric characteristics. Each image can be represented by a 3rd-order tensor of size $128\times 128\times 3$ and then is downsampled to $32\times 32\times 3$. Hence, the dataset can be finally organized as a 4th-order tensor of size $32\times 32\times 3\times 7200$. The number of features is determined by $r_4\times r_1$, while the flexibility of subspace bases is determined by $r_2, r_3$.  Subsequently, we apply the K-nearest neighbor (KNN) classifier with K=1 for classification. For detailed comparisons, we randomly select a certain ratio $\rho=50\%$ or $\rho=10\%$ samples as the training set and the rest as the test set. The classification performance is averaged over 10 times of random splitting. In Table \ref{tab:coil100},  $r_{max}$ of TR decompositions is much smaller than that of TT-SVD. It should be noted that TR representation, as compared to TT, can obtain more compact and discriminant representations.  

We have  conducted an additional experiment on video classifications (see detailed results in the Appendix).

\section{Conclusion}
\label{sec:conclusion}
We have proposed a novel tensor decomposition model, which provides an efficient representation for a very high-order tensor by a sequence of low-dimensional cores. The number of parameters in our model scales only linearly to the tensor order. To optimize the latent cores, we have presented two different algorithms: TR-SVD is a non-recursive algorithm that is stable and efficient, while TR-BALS can learn a more compact representation with adaptive TR-ranks. Furthermore, we have investigated  the properties on how the basic multilinear algebra can be performed efficiently by  operations over TR representations (i.e., cores), which provides a  powerful framework for processing large-scale data.  The experimental results verified the effectiveness of our proposed   algorithms.

\bibliographystyle{IEEEtran}
\bibliography{IEEEabrv,TensorNetwork}

\newpage
\appendix

\section{Proof of Theorem~\ref{theorem:invariance}}\label{sec:theorem:invariance}
\begin{proof}
It is obvious that (\ref{eq:TRD1}) can be rewritten as
\begin{multline}
T(i_1, i_2,\ldots, i_d) = \text{Tr}(\mat Z_2(i_2), \mat Z_3(i_3),\ldots, \mat Z_d(i_d), \mat Z_1(i_1))\\
 =\cdots =  \text{Tr}(\mat Z_d(i_d), \mat Z_1(i_1),\ldots, \mat Z_{d-1}(i_{d-1})).
\nonumber
\end{multline}
Therefore, we have
${\overleftarrow {\tensor {T}}^k} =\Re(\tensor Z_{k+1}, \ldots, \tensor Z_d, \tensor Z_{1},\ldots, \tensor Z_k)$.
\end{proof}

\section{Proof of Theorem~\ref{theorem:T1+T2}}\label{sec:theorem:T1+T2}
\begin{proof}
According to the definition of  TR decomposition, and the cores shown in (\ref{eq:TRsum1}), the $(i_1,\ldots,i_d)$th element of tensor $\tensor T_3$ can be written as
\begin{equation}
\begin{split}
   T_3(i_1,\ldots,i_d)       =&               \text{Tr}\left(
                  \begin{array}{cc}
                     \prod_{k=1}^d \mat Z_k(i_k) & 0 \\
                    0 &  \prod_{k=1}^d \mat Y_k(i_k) \\
                  \end{array}
                \right)=\text{Tr}\left(\prod_{k=1}^d \mat Z_k(i_k)\right) + \text{Tr}\left(\prod_{k=1}^d \mat Y_k(i_k)\right).
                \end{split}
\nonumber
\end{equation}
Hence, the \emph{addition} of tensors in the TR format can be performed by  merging of their cores.
\end{proof}

\section{Proof of Theorem~\ref{theorem:TRtimesvectors}}\label{sec:theorem:TRtimesvectors}
\begin{proof}
The \emph{multilinear product} between a tensor and vectors can be expressed by
\begin{equation}
\nonumber
\begin{split}
c=& \sum_{i_1,\ldots,i_d} T(i_1,\ldots, i_d) u_1(i_1)\cdots u_d(i_d)
= \sum_{i_1,\ldots,i_d} \text{Tr}\left(\prod_{k=1}^d \mat Z_k(i_k)\right) u_1(i_1)\cdots u_d(i_d)\\
=& \text{Tr}\left( \prod_{k=1}^d \left(\sum_{i_k=1}^{n_k} \mat Z_k(i_k)u_k(i_k)  \right)       \right).
\end{split}
\end{equation}
Thus, it can be written as a TR decomposition shown in (\ref{eq:TRmultiproduct}) where each core $\mat X_k \in\mathbb{R}^{r_k\times r_{k+1}}$ becomes a matrix. The computational complexity is equal to $\mathcal{O}(dnr^2)$.
\end{proof}

\section{Proof of Theorem~\ref{theorem:T1HadamardT2}}\label{sec:theorem:T1HadamardT2}
\begin{proof}
Each element in tensor $\tensor T_3$ can be written as
\begin{equation}
\nonumber
\begin{split}
T_3(i_1,\ldots,i_d) 
=&\text{Tr}\left(\prod_{k=1}^d \mat Z_k(i_k)\right)\text{Tr}\left(\prod_{k=1}^d \mat Y_k(i_k)\right)
= \text{Tr}\left\{\left(\prod_{k=1}^d \mat Z_k(i_k)\right)\otimes \left(\prod_{k=1}^d \mat Y_k(i_k)\right)\right\}\\
=& \text{Tr} \left\{\prod_{k=1}^d \Big( \mat Z_k(i_k) \otimes \mat Y_k(i_k) \Big) \right\}.
\end{split}
\end{equation}
Hence, $\tensor T_3$ can be also represented as TR format with its cores computed by (\ref{eq:TRhadama}), which costs $\mathcal{O}(dnq^2)$.
\end{proof}


\section{KTH video dataset}

\begin{figure}[htbp]
  \centering
  \includegraphics[width=1\columnwidth]{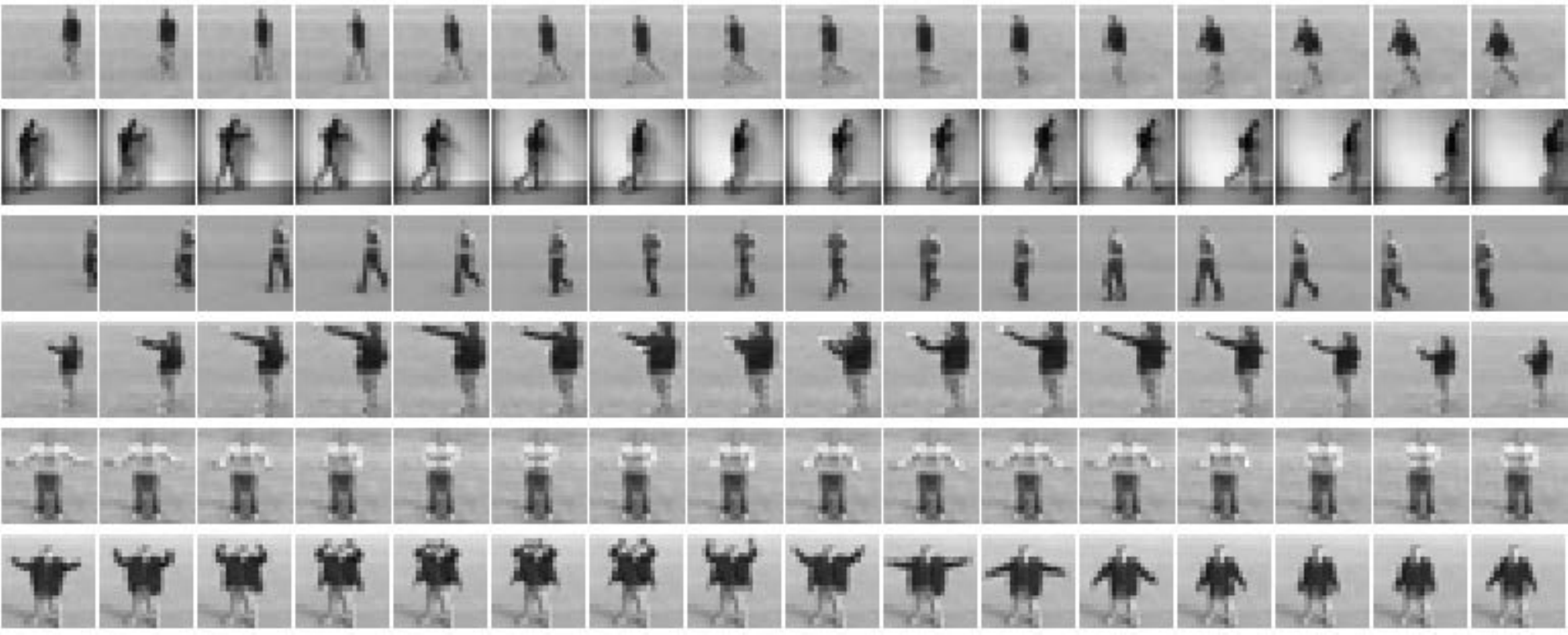}\\
  \caption{Video dataset consists of six types of human actions performed by 25 subjects in four different scenarios. From the top to bottom, six video examples corresponding to each type of actions are shown.  }
  \label{fig:KTHdataset}
\end{figure}


\begin{table}[htbp]
\renewcommand{\arraystretch}{1.1}
\caption{The comparisons of different algorithms on KTH dataset. $\epsilon$ denotes the obtained relative error; $r_{max}$ denotes maximum rank; $\bar{r}$ denotes the average rank;  and Acc. is the classification accuracy.  }
\label{tab:KTH}
\centering
\begin{tabular}{p{15mm} c c c c c}
\hline
 & $\epsilon$ & $r_{max}$ & $\bar{r}$ & Acc. ($5\times 5$-fold) \\
\hline
\multirow{3}{*}{ CP-ALS}
& 0.20 & 300 & 300  & 80.8 \% \\
& 0.30 & 40 & 40  & 79.3 \%\\
& 0.40 & 10 & 10  & 66.8 \%\\
\hline
\multirow{4}{*} {TT-SVD}
 & 0.20 & 139  & 78.0   & 84.8 \% \\
 & 0.29 & 38  & 27.3   & 83.5 \% \\
 & 0.38 & 14   & 9.3    & 67.8 \% \\
 \hline
\multirow{3}{*} {TR-SVD}
 & 0.20 & 99  & 34.2   & 78.8 \% \\
 & 0.29 & 27  & 12.0    & 87.7 \% \\
 & 0.37 & 10   & 5.8   & 72.4 \% \\
\hline
\end{tabular}
\end{table}

We test the TR representation for KTH video database~\cite{laptev2006local} containing six types of human actions (walking, jogging, running, boxing, hand waving and hand clapping) performed several times by 25 subjects in four different scenarios: outdoors, outdoors with scale variation, outdoors with different clothes and indoors as illustrated in Fig.~\ref{fig:KTHdataset}. There are 600 video sequences for each combination of 25 subjects, 6 actions and 4 scenarios. Each video sequence was downsampled to $20\times 20\times 32$. Finally, we can organize the dataset as a tensor of size $20\times 20\times 32\times 600$.  For extensive comparisons, we choose different error bound $\epsilon_p\in \{0.2,0.3,0.4\}$.  In Table \ref{tab:KTH}, we can see that TR representations achieve better compression ratio reflected by smaller $r_{max}, \bar{r}$ than that of TT-SVD, while TT-SVD achieves better compression ratio than CP-ALS. For instance, when $\epsilon\approx 0.2$, CP-ALS requires $r_{max}=300$, $\bar{r}=300$; TT-SVD requires $r_{max}=139$, $\bar{r}=78$, while TR-SVD only requires $r_{max}=99$, $\bar{r}=34.2$. For classification performance, we observe that the best accuracy ($5\times 5$-fold cross validation) achieved by CP-ALS, TT-SVD, TR-SVD are 80.8\%, 84.8\%, 87.7\%, respectively. Note that these classification performances might not  be the state-of-the-art on this dataset, we mainly focus on the comparisons of representation ability among CP, TT, and TR decomposition frameworks. To obtain the best performance, we may apply the powerful  feature extraction methods to TT or TR representations of dataset. It should be noted that TR decompositions achieve the best classification accuracy when $\epsilon= 0.29$, while TT-SVD and CP-ALS achieve their best classification accuracy when $\epsilon=0.2$. This indicates that TR decomposition can preserve more discriminant information even when the approximation error is relatively high.   This experiment demonstrates that TR decompositions are effective for unsupervised feature representation due to their flexibility of TR-ranks and high compression ability.

\end{document}